\documentclass{amsart}

\usepackage{cite}
\usepackage{hyperref}
\usepackage[shortlabels]{enumitem}
\usepackage{mathtools}

\newtheorem{theorem}{Theorem}[section]

\theoremstyle{definition}
\newtheorem{definition}[theorem]{Definition}
\theoremstyle{remark}
\newtheorem{remark}[theorem]{Remark}
\newtheorem{corollary}[theorem]{Corollary}

\numberwithin{equation}{section}
\allowdisplaybreaks

\newcommand{\ex}{\mathbb E}
\newcommand{\RR}{\mathbb R}
\newcommand{\prob}{\mathbb P}

\calclayout

\begin{document}

\title[Quasihelix properties of selected Volterra Gaussian processes]{Quasihelix properties\\ of selected Volterra Gaussian processes}

\author{Yuliya Mishura}
\address[Y. Mishura]{Department of Probability, Statistics and Actuarial Mathematics, Taras Shevchenko National University of Kyiv,
Volodymyrska 64, 01601 Kyiv, Ukraine}
\email{yuliyamishura@knu.ua}
\thanks{Y.M. is partially supported by the Japan Science and Technology Agency CREST, project reference number JPMJCR2115}

\author{Kostiantyn Ralchenko}
\address[K. Ralchenko]{Department of Probability, Statistics and Actuarial Mathematics, Taras Shevchenko National University of Kyiv,
Volodymyrska 64, 01601 Kyiv, Ukraine}
\email{kostiantynralchenko@knu.ua}
\address[K. Ralchenko]{School of Technology and Innovations, University of Vaasa, P.O. Box 700, Vaasa, FIN-65101, Finland}
\thanks{K.R. is supported by the Research Council of Finland, decision
number 367468.}

\keywords{Gaussian Volterra process, quasihelix, generalized quasihelix, incremental variance, tempered fractional Brownian motion}

\subjclass[2020]{60G15, 60G22}

\date{}

\begin{abstract}
We study local quasihelix and generalized quasihelix properties of several Gaussian Volterra processes with tempered, power-weighted, and logarithmic kernels, including tempered fractional Brownian motions and generalized fractional Brownian motion-type processes. These properties depend significantly on the values  of the parameters involved, and we consider all possible cases in detail.
\end{abstract}
\maketitle

\section{Introduction}

Gaussian Volterra processes form a flexible class of Gaussian processes whose covariance structure is generated by deterministic Volterra-type kernels. Although their finite-dimensional distributions remain Gaussian, suitable choices of the kernel allow one to model non-Markovian and non-semimartingale dynamics, memory effects, nonstationarity, and different forms of local regularity. Except in special cases, such as fractional Brownian motion, these processes typically do not have stationary increments, which makes their path properties more delicate and strongly dependent on the kernel.

Such processes arise in a wide range of applications, including anomalous diffusion in complex physical systems \cite{anomdiff1,anomdiff2}, turbulence and geophysical flows \cite{turbulence}, biological and medical applications such as intracellular transport and brain structure modeling \cite{cells,brain}, energy and electricity markets \cite{electricity}, and modern mathematical finance, in particular rough volatility modeling \cite{volatility1,volatility2,volatility3}. Fractional Brownian motion is the classical example, but recent work has considered various modifications of fractional and Riemann--Liouville kernels by means of tempering, weighting, or logarithmic factors. These include exponentially tempered fractional Brownian motion \cite{Meerschaert2013}, power-weighted or power-tempered variants \cite{Azmoodeh2021}, and processes based on logarithmic Hadamard-type kernels \cite{Beghin2025,Beghin2026}.

The purpose of this paper is to study quasihelix and generalized quasihelix properties for several such Gaussian Volterra processes. The notion of a quasihelix was introduced by Kahane \cite{Kahane1985}; generalized quasihelices and their use in deriving upper and lower bounds for expected maxima were developed in \cite{Borovkov2017}. Since these properties are governed by the behavior of the incremental variance, they provide a useful tool for analyzing pathwise behavior and for obtaining estimates relevant to the study of maxima and numerical approximations.

We consider Gaussian Volterra processes whose integral representations start either from zero or from $-\infty$. For processes started from zero, we analyze kernels of three types: an exponentially tempered Riemann--Liouville kernel, a power-weighted Riemann--Liouville kernel, and a logarithmic Hadamard-type kernel. In the power-weighted case, we distinguish between intervals separated from the origin and intervals of the form $[0,T]$, where the singularity at zero leads to weaker generalized quasihelix bounds. We also derive exact local asymptotics, as the length of the incremental interval tends to zero,  of the incremental variance on intervals $[T_1,T_2]$, $0<T_1<T_2$.

For processes started from $-\infty$, we collect known quasihelix properties of tempered fractional Brownian motion and tempered fractional Brownian motion of the second kind and explain their extension to arbitrary compact intervals. We also consider a power-weighted process introduced in \cite{Wang2024}, decompose it into a zero-started component and an independent component supported on $(-\infty,0]$, and establish the corresponding local quasihelix properties.

The paper is organized as follows. Section~\ref{sec:prelim} introduces the two classes of Gaussian Volterra processes under consideration  and recalls the definitions of quasihelix and generalized quasihelix properties. Section~\ref{sec:quasihelix} contains the main results, describing in detail all cases  of possible values of parameters and respective quasihelix properties. Subsection~\ref{sec:gvp-zero} treats processes started from zero, while Subsection~\ref{sec:gvp-inf} is devoted to processes started from $-\infty$.

\section{Preliminaries}
\label{sec:prelim}
We begin with basic definitions and introduce the main concepts used throughout the paper.

\subsection{Two classes of Gaussian Volterra processes}

Let $(\Omega,\mathcal{F},\prob)$ be a complete probability space supporting a two-sided Brownian motion
$W=\{W_t,\ t\in\RR\}$. We consider two classes of centered Gaussian processes of Volterra type. Both are defined by stochastic integral representations, but they differ in the lower limit of integration.

The first class consists of processes started from zero:
\begin{equation*}
    X_t = \int_0^t K(t,s)\,dW_s, \qquad t\ge0,
\end{equation*}
where $K\colon [0,\infty)^2\to\RR$ is a deterministic measurable kernel. To ensure that the process is well defined and has finite variance, we assume that
\begin{equation*}
    \int_0^t K(t,s)^2\,ds < \infty, \qquad t\ge 0.
\end{equation*}
The covariance function of $X$ is then given by
\[
R(t,s)=\ex[X_tX_s]
=
\int_0^{t\wedge s} K(t,u)K(s,u)\,du.
\]

The second class consists of processes started from $-\infty$:
\begin{equation*} 
    Y_t = \int_{-\infty}^t K(t,s)\,dW_s, \qquad t\ge0,
\end{equation*}
where $K\colon\RR^2\to\RR$ is a deterministic measurable kernel. In this case we assume that
\begin{equation*}
    \int_{-\infty}^t K(t,s)^2\,ds < \infty, \qquad t>0,
\end{equation*}
and the covariance function of $Y$ is
\[
R(t,s)=\ex[Y_tY_s]
=
\int_{-\infty}^{t\wedge s} K(t,u)K(s,u)\,du.
\]

Throughout the paper, we use the standard $L_2(\prob)$-norm
defined as $\|\cdot\|_2= (\ex|\cdot|^2)^{1/2}$.

\subsection{Quasihelices and generalized quasihelices}

According to the seminal results of Talagrand \cite{Talagrand2005,Talagrand2014}, bounds for the expected maximum of a continuous Gaussian process are closely related to the behavior of its incremental variance. More precisely, such bounds are connected with whether the Gaussian process has the quasihelix or generalized quasihelix property. The notion of a quasihelix was introduced by Kahane \cite{Kahane1985} and later extended to the notion of a generalized quasihelix in \cite{Borovkov2017}. Following this terminology, we use the following definitions.

\begin{definition}
A Gaussian process $G=\{G_t,\ t\ge0\}$ is called
\begin{itemize}
    \item[$(i)$] a \emph{$\rho$-quasihelix} if there exist $\rho>0$ and constants $C_1,C_2>0$ such that, for all $t,s\in[0,\infty)$,
    \begin{equation*}
        C_1|t-s|^{\rho}
        \le
        \|G_t-G_s\|_2
        \le
        C_2|t-s|^{\rho};
    \end{equation*}

    \item[$(ii)$] a \emph{local $(\rho,T_1,T_2)$-quasihelix} on the interval $[T_1,T_2]$ if there exist $\rho>0$ and constants $C_1(T_1,T_2),C_2(T_1,T_2)>0$ such that, for all $t,s\in[T_1,T_2]$,
    \begin{equation}\label{loc_quasihelix}
        C_1(T_1,T_2)|t-s|^{\rho}
        \le
        \|G_t-G_s\|_2
        \le
        C_2(T_1,T_2)|t-s|^{\rho};
    \end{equation}

    \item[$(iii)$] a \emph{local $(\rho_1,\rho_2,T_1,T_2)$-generalized quasihelix} on the interval $[T_1,T_2]$ if there exist $\rho_1,\rho_2>0$ and constants $C_1(T_1,T_2),C_2(T_1,T_2)>0$ such that, for all $t,s\in[T_1,T_2]$,
    \begin{equation}\label{loc_gen_quasihelix}
        C_1(T_1,T_2)|t-s|^{\rho_1}
        \le
        \|G_t-G_s\|_2
        \le
        C_2(T_1,T_2)|t-s|^{\rho_2}.
    \end{equation}
\end{itemize}
\end{definition}

\begin{remark}
If \eqref{loc_gen_quasihelix} holds on $[T_1,T_2]$, then necessarily $\rho_1\ge \rho_2$. Indeed, otherwise the lower bound would eventually dominate the upper bound as $t\to s$.

Clearly, any local quasihelix or generalized quasihelix property on an interval $[T_1,T_2]$ is inherited by every subinterval $[T_1',T_2']\subseteq[T_1,T_2]$. However, as will be seen below, the validity of such a property on every interval $[T_1,T_2]$ with $0<T_1<T_2$ does not necessarily imply its validity on intervals of the form $[0,T]$. This distinction is caused by possible singularities of the kernels of Gaussian Volterra processes at the origin.
\end{remark}
\section{Gaussian Volterra processes and their quasihelix properties}
\label{sec:quasihelix}

Let us consider quasihelix properties of several classes of Gaussian Volterra processes. Some of them were originally introduced under the names of tempered fractional Brownian motion and generalized fractional Brownian motion; however, we shall avoid this terminology here to prevent possible confusion.

\subsection{Gaussian Volterra processes starting from zero}
\label{sec:gvp-zero}

We consider three specific Gaussian Volterra processes defined by distinct kernels $K(t,s)$ and started from zero. We analyze their properties in the context of conditions \eqref{loc_quasihelix} and \eqref{loc_gen_quasihelix}.

First, the process introduced in \cite{Mishura2024} is defined via the exponential-tempered Riemann--Liouville kernel. For parameters $\lambda > 0$ and $\alpha > -\frac12$, it is given by
\begin{equation}\label{tfbm}
    U^{(1)}(t) = \int_0^t e^{-\lambda u}(t - u)^{\alpha} \, dW_u.
\end{equation}

Next, the process introduced in \cite[Remark 5.1]{Pang2019} and further studied in \cite{Ichiba2022, Wang2024} is defined via the power-tempered Riemann--Liouville kernel. For parameters $\gamma \in (0,1)$ and $\alpha > -\frac12$, it is given by
\begin{equation}\label{gfbm}
    U^{(2)}(t) = \int_0^t u^{-\gamma/2} (t - u)^{\alpha} \, dW_u.
\end{equation}

Finally, the process introduced in \cite{Beghin2025} is constructed using the Hadamard fractional integral kernel. For a parameter $\alpha > 0$, it is given by
\begin{equation}\label{hfbm}
    U^{(3)}(t) = \int_0^t \left( \log \frac{t}{u} \right)^{\frac{\alpha - 1}{2}} \, dW_u.
\end{equation}
\begin{remark} 
All three processes are centered Gaussian processes. Their second moments are given by
\begin{gather*}
\ex\bigl(U^{(1)}(t)\bigr)^2
= e^{-2\lambda t} \int_0^t e^{2\lambda u} u^{2\alpha} \, du,\;
 \ex\bigl(U^{(2)}(t)\bigr)^2
=   B\bigl(1-\gamma,\, 1+2\alpha\bigr)\, t^{-\gamma+2\alpha+1},
\\
\ex\bigl(U^{(3)}(t)\bigr)^2
=   \Gamma(\alpha)\, t.
\end{gather*}
In this connection, we see that it is convenient to normalize
$U^{(3)}$ by $(\Gamma(\alpha))^{-1/2}$ and get just $t$
(this normalizing factor is introduced in~\cite{Beghin2025}). Also, normalization by a constant multiplier is  possible for $U^{(2)}$, but not for  $U^{(1)}$.
Since we are more interested in the trajectory-wise properties of $U^{(i)}$
as functions of $t$, and not in the specific values of normalizing multipliers, we shall not apply normalization.
\end{remark}
 
The next theorems collect the quasihelix and generalized quasihelix properties of the processes \eqref{tfbm}--\eqref{hfbm}. Some of these properties have already been established, so we only provide relevant references and prove only those properties that have not been established previously.

\begin{theorem}\label{theor1-1a}
\leavevmode
\begin{enumerate}[1),left=5pt]
\item \label{th1-1)}
 \begin{enumerate}[(i)]
    \item For any fixed $\lambda > 0$ and $\alpha\in(-1/2, 1/2)$ the process $U^{(1)}$ is a local $(\alpha+\frac12, 0, T)$-quasihelix for any $T>0$.
    \item Let $\alpha = 1/2$. Then for any fixed $\lambda > 0 $ and any $\varepsilon\in(0,1)$ the process $U^{(1)}$ is a  local $(1, 1 - \varepsilon, 0, T)$-generalized quasihelix for any $T>0$.  
    \item Let $\alpha > 1/2$. Then for any fixed $\lambda > 0$ the process $U^{(1)}$ is a local $(\alpha+\frac12, 1$, $0, T)$-generalized quasihelix  for any $T>0$ and $(1, T_1, T_2)$-quasihelix  for any  \mbox{$0 < T_1 < T_2$}.
\end{enumerate} 

\item \label{th1-2)}
For any $T > 0$, the   process $U^{(3)}$ is local $(1, \alpha, 0, T)$-generalized quasihelix in the case $\alpha \in (0, 1)$ and local $(\alpha, 1, 0, T)$-generalized quasihelix in the case $\alpha \in (1, 2)$. When $\alpha=1$, the process becomes standard Wiener process, and thus it is obviously a local $(\frac12, 0, T)$-quasihelix for all $T > 0$. For $\alpha \ge 2$ the process is a local $(\frac{\alpha}{2}, \frac12, 0, T)$-generalized quasihelix for any $T>0$.
\end{enumerate}
\end{theorem}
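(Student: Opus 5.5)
Part~\ref{th1-1)} has largely been established in \cite{Mishura2024}, so we would cite it and only check the endpoints. Part~\ref{th1-2)} we would prove directly by analysing the incremental variance.

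\textbf{Part~\ref{th1-1)}.} For $0\le s<t\le T$ and $h=t-s$, split
\[
U^{(1)}(t)-U^{(1)}(s)=\int_s^t e^{-\lambda u}(t-u)^\alpha\,dW_u+\int_0^s e^{-\lambda u}\bigl((t-u)^\alpha-(s-u)^\alpha\bigr)\,dW_u .
\]
The two pieces are independent.

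The first piece has variance comparable to $h^{2\alpha+1}$, uniformly for $s,t\in[0,T]$, since $e^{-2\lambda u}\in[e^{-2\lambda T},1]$. This already gives the lower bound $C_1h^{\alpha+1/2}$ in all three cases (i)--(iii).

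For the second piece we bound $e^{-\lambda u}$ above by $1$ and below by $e^{-\lambda T}$. This reduces its variance to the Riemann--Liouville quantity
\[
\int_0^s\bigl((s-u+h)^\alpha-(s-u)^\alpha\bigr)^2du .
\]
\begin{itemize}
\item For $|\alpha|<1/2$ this is $\le C h^{2\alpha+1}$ (substitute $v=(s-u)/h$; the integrand is integrable at infinity), which gives (i).
\item For $\alpha=1/2$ the integral is $\le Ch^2(1+\log(T/h))$, hence $\le C_\varepsilon h^{2-2\varepsilon}$. The lower bound $C_1h$ comes from the first piece. This gives (ii).
\item For $\alpha>1/2$ the mean value theorem gives an integrand $\le Ch^2 T^{2\alpha-2}$, so the total is $\le Ch^2$. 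This gives the upper exponent $1$ on $[0,T]$.
\end{itemize}

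On $[T_1,T_2]$ with $\alpha>1/2$ we also need a lower bound $ch^2$. We would restrict the second integral to $u\in[0,T_1/2]$. There $(t-u)^\alpha-(s-u)^\alpha\ge \alpha h\min\bigl((T_1/2)^{\alpha-1},T_2^{\alpha-1}\bigr)$, so this part of the variance is $\ge ch^2$.

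\textbf{Part~\ref{th1-2)}.} The key step is the substitution $u=tv$ together with Brownian scaling. From it, $U^{(3)}(t)=\int_0^t(\log(t/u))^{\beta}\,dW_u$ with $\beta=(\alpha-1)/2$, and
\[
\ex\bigl(U^{(3)}(t)-U^{(3)}(s)\bigr)^2=\int_s^t\log^{2\beta}(t/u)\,du+\int_0^s\bigl(\log^\beta(t/u)-\log^\beta(s/u)\bigr)^2du .
\]
In the first integral, $\log(t/u)$ is comparable to $(t-u)/t$ on $[s,t]$. This gives $\asymp h^{2\beta+1}t^{-2\beta}=h^{\alpha}t^{1-\alpha}$.

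In the second integral, put $u=sw$, $a=\log(t/s)$, $x=\log(1/w)$. It becomes
\[
s\int_0^\infty e^{-x}\bigl((x+a)^\beta-x^\beta\bigr)^2dx .
\]
For small $a$ this behaves like:
\begin{itemize}
\item $a^{2\beta+1}$ if $\beta<1/2$ (the contribution is near $x\lesssim a$);
\item $a^2\log(1/a)$ if $\beta=1/2$;
\item $a^2$ if $\beta>1/2$.
\end{itemize}
We would combine these with $a\approx h/s$ and the factors $s$, $t^{1-\alpha}$, and then check uniformity in $s\in[0,T]$, including $s\to0$.

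This uniformity is the main obstacle. The factors $t^{1-\alpha}$ and $s\cdot(h/s)^{\cdots}$ blow up or degenerate near $0$, which is why the exponents on $[0,T]$ are not matched:
\begin{itemize}
\item For $\alpha\in(0,1)$ we get upper $h^{\alpha}$ (after using $t^{1-\alpha}\le T^{1-\alpha}$) and lower $h$ (variance $\ge t-s$, since $\ex U^{(3)}(t)^2=\Gamma(\alpha)t$ and the increments are positively structured).
\item For $\alpha\in(1,2)$ the roles are swapped.
\item For $\alpha\ge2$ the upper bound $h^{1/2}$ comes from the case $s\ll h$, where the increment variance is $\asymp t$. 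The lower bound $h^{\alpha/2}$ comes from the first integral.
\end{itemize}
To handle the regimes, we would split into $h\ge s$ (crude bounds via $\ex U^{(3)}(t)^2=\Gamma(\alpha)t$) and $h<s$ (the asymptotics above). We would then take the worst exponent over the two regimes, checking at the end that the square roots of the variance exponents match those stated in Theorem~\ref{theor1-1a}.
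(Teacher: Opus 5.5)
The paper does not prove this theorem. Both parts are quoted from \cite{Mishura2024} and \cite{Beghin2025}.

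Your Part~1 is a correct self-contained re-derivation. It follows the same lines the paper uses for $U^{(2)}$ in Theorem~\ref{theor1-1b}: independent pieces over $[s,t]$ and $[0,s]$, and restriction to $u\in[0,T_1/2]$ for the lower bound on $[T_1,T_2]$. There is one slip. For $1/2<\alpha<1$ the mean value theorem does not give an integrand $\le Ch^2T^{2\alpha-2}$, because $2\alpha-2<0$. Use instead $(\xi-u)^{2\alpha-2}\le(s-u)^{2\alpha-2}$ and $\int_0^s(s-u)^{2\alpha-2}\,du=s^{2\alpha-1}/(2\alpha-1)$, which still gives $Ch^2$.

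The genuine gap is in Part~2, in the step you defer to the end: checking that the square roots of the variance exponents match those stated. For $\alpha\in(0,1)$ and $\alpha\in(1,2)$ this check fails, and no argument can rescue it.

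For $\alpha<1$, the inequality $\log(t/u)\le(t-u)/s$ on $[s,t]$ gives
\[
\ex\bigl(U^{(3)}(t)-U^{(3)}(s)\bigr)^2\ge\int_s^t\log^{\alpha-1}(t/u)\,du\ge\frac{s^{1-\alpha}}{\alpha}\,h^{\alpha}.
\]
For $\alpha>1$, the inequality $\log(t/u)\ge(t-u)/t$ gives the lower bound $T^{1-\alpha}h^{\alpha}/\alpha$. Take $s=T/2$ and let $h\downarrow0$. Then $\|U^{(3)}(t)-U^{(3)}(s)\|_2$ is at least of order $h^{\alpha/2}$. This contradicts the upper bound $C_2h^{\alpha}$ required for $\alpha\in(0,1)$, and the upper bound $C_2h$ required for $\alpha\in(1,2)$.

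What your estimates actually yield are variance bounds:
\begin{itemize}
\item for $\alpha\in(0,1)$, $ch\le\ex(\cdot)^2\le Ch^{\alpha}$, i.e.\ a local $(\frac12,\frac\alpha2,0,T)$-generalized quasihelix under the paper's norm-based definition;
\item for $\alpha\in(1,2)$, $ch^{\alpha}\le\ex(\cdot)^2\le Ch$, i.e.\ a local $(\frac\alpha2,\frac12,0,T)$-generalized quasihelix.
\end{itemize}
The pairs $(1,\alpha)$ and $(\alpha,1)$ in the statement are exponents of the variance, not of $\|\cdot\|_2$.

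Your sketch switches convention between cases without noticing. For $\alpha=1$ and $\alpha\ge2$ you take square roots: variance $\asymp h$ gives $h^{1/2}$, and variance $\gtrsim h^{\alpha}$ gives $h^{\alpha/2}$. For $\alpha\in(0,1)$, however, you read ``upper $h^\alpha$, lower $h$'' directly off the variance and declare it to be the stated result. Only the upper bounds fail; the stated lower bounds are true but weaker than what you get. A correct write-up must either prove the halved exponents or explicitly flag the mismatch with the statement. Asserting that the final check succeeds is exactly where the proposal breaks. Your handling of $\alpha=1$ and $\alpha\ge2$ is consistent with the definition.
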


Statements \ref{th1-1)} and \ref{th1-2)} were proved in \cite{Mishura2024} and \cite{Beghin2025}, respectively. Therefore the proof is omitted. 
 
The following result can be characterized as the first and somewhat rougher approach to assessing the properties of the process $U^{(2)}$ as a quasihelix.  In what follows, we shall denote $C(\alpha, \gamma), C(T,\alpha, \gamma)$ and $C(T_1,T_2,\alpha, \gamma)$ various constants that depend on these parameters. Since their specific values are not important in the context of quasihelix properties, it will not lead to some misunderstanding. 

\begin{theorem}\label{theor1-1b}
Let $\gamma\in[0,1)$. Then for any $0<T_1<T_2$ the following statements hold.
\begin{enumerate}
\item[$(i)$]
Let $\alpha \in (-1/2, 1/2)$.
Then 
\begin{equation}\label{eq:asymp}
\ex\left(U^{(2)}(t)-U^{(2)}(s)\right)^2
\sim C(\alpha)(t-s)^{2\alpha+1}s^{-\gamma},
\quad \text{as } t\downarrow s,
\end{equation}
for any $s\in[T_1,T_2]$,
where
\begin{equation}\label{eq:Calpha}
C(\alpha) = \int_0^\infty \bigl[(1+z)^\alpha-z^\alpha\bigr]^2\,dz
+ \frac{1}{2\alpha+1}
= \frac{\Gamma(\alpha+1)^2} {\Gamma(2\alpha+2)\cos(\pi\alpha)}.
\end{equation}
Consequently, the process $U^{(2)}$ is a local $(\alpha+\frac12,T_1,T_2)$-quasihelix.

\item[$(ii)$]
If $\alpha=1/2$, then the process  $U^{(2)}$ is a local $(1,1-\varepsilon,T_1,T_2)$-generalized quasihelix for any $\varepsilon\in(0,1)$.

\item[$(iii)$]
If $\alpha > 1/2$, then the process $U^{(2)}$ is a local $  (1,T_1,T_2)$-quasihelix.
\end{enumerate}
\end{theorem}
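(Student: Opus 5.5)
We split the increment, for $0<s<t$, into the usual two pieces:
\[
U^{(2)}(t)-U^{(2)}(s)=\int_s^t u^{-\gamma/2}(t-u)^{\alpha}\,dW_u+\int_0^s u^{-\gamma/2}\bigl[(t-u)^\alpha-(s-u)^\alpha\bigr]\,dW_u =: I_1+I_2 .
\]
$I_1$ and $I_2$ are independent, so
\[
\ex\bigl(U^{(2)}(t)-U^{(2)}(s)\bigr)^2=\int_s^t u^{-\gamma}(t-u)^{2\alpha}du+\int_0^s u^{-\gamma}\bigl[(t-u)^\alpha-(s-u)^\alpha\bigr]^2du .
\]
The lower bound for the variance is then at least the first integral. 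On $[T_1,T_2]$ the factor $u^{-\gamma}$ lies between $T_2^{-\gamma}$ and $T_1^{-\gamma}$, so the first integral is comparable to $(t-s)^{2\alpha+1}/(2\alpha+1)$ for every $\alpha>-1/2$.

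\textbf{Case (i).} Put $h=t-s$ and substitute $u=s-hz$ in the second integral. This gives
\[
h^{2\alpha+1}\int_0^{s/h}(s-hz)^{-\gamma}\bigl[(1+z)^\alpha-z^\alpha\bigr]^2dz .
\]
For $|\alpha|<1/2$ the function $[(1+z)^\alpha-z^\alpha]^2$ is integrable on $(0,\infty)$: it is $O(z^{2\alpha-2})$ at infinity and locally integrable at $0$.

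For the asymptotics we take $z\le s/(2h)$. There $(s-hz)^{-\gamma}\to s^{-\gamma}$ with the uniform bound $(s/2)^{-\gamma}$, so dominated convergence gives the limit $s^{-\gamma}\int_0^\infty[\cdots]^2dz$. The remaining range $z\in(s/(2h),s/h)$ is the main technical point, because of the singularity of $u^{-\gamma}$ near $u=0$. In the original variable this is $u\in(0,s/2)$, where $|(t-u)^\alpha-(s-u)^\alpha|\le C h\,(s/2)^{\alpha-1}$ by the mean value theorem. That part is therefore $O(h^2)\int_0^{s/2}u^{-\gamma}du=O(h^2)=o(h^{2\alpha+1})$, since $2\alpha+1<2$.

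The first integral is asymptotically $s^{-\gamma}h^{2\alpha+1}/(2\alpha+1)$. Adding the two pieces gives \eqref{eq:asymp} with the first expression for $C(\alpha)$ in \eqref{eq:Calpha}. The closed form of $C(\alpha)$ is the classical Mandelbrot--Van Ness normalizing constant of fBm with $H=\alpha+1/2$, namely $\Gamma(H+1/2)^2/(\Gamma(2H+1)\sin(\pi H))$, and $\sin(\pi H)=\cos(\pi\alpha)$.

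For the uniform two-sided bound we do not use the asymptotics. The same substitution bounds the second integral above by $h^{2\alpha+1}T_1^{-\gamma}$ times the integral over $z\le T_1/(2h)$, plus the $O(h^2)\le C h^{2\alpha+1}$ remainder, uniformly for $s,t\in[T_1,T_2]$. Together with the lower bound from the first integral, this gives the local $(\alpha+\frac12,T_1,T_2)$-quasihelix property.

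\textbf{Cases (ii) and (iii).} The lower bound from the first integral is $c\,h^{2\alpha+1}$, which is not sharp. Instead we write $(t-u)^\alpha-(s-u)^\alpha=\alpha\int_s^t (v-u)^{\alpha-1}dv$ on $u<s$ and estimate the second integral directly.

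For $\alpha>1/2$ the second integral equals $h^2\int_0^s u^{-\gamma}\alpha^2\theta_u^{2(\alpha-1)}du$ with $\theta_u\in(s-u,t-u)$. Since $2(\alpha-1)>-1$, this is comparable to $h^2$:
\begin{itemize}
\item when $\alpha\ge1$, bound $\theta_u$ between $s-u$ and $T_2$;
\item when $\alpha<1$, bound $\theta_u^{2(\alpha-1)}\le (s-u)^{2\alpha-2}$ from above and $\ge T_2^{2\alpha-2}$ from below.
\end{itemize}
The integral $\int_0^s u^{-\gamma}(s-u)^{2\alpha-2}du$ is a bounded Beta-type quantity, bounded away from zero for $s\ge T_1$. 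The first integral is $O(h^{2\alpha+1})=O(h^2)$. Hence the variance is $\asymp h^2$, which is the $(1,T_1,T_2)$-quasihelix property.

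For $\alpha=1/2$ the second integral is $h^2$ times $\int_0^s u^{-\gamma}\bigl(\tfrac{(t-u)^{1/2}-(s-u)^{1/2}}{h}\bigr)^2du$, which behaves like $\int u^{-\gamma}(s-u+h)^{-1}du\asymp\log(1/h)$. This gives lower bound $\ge c h^2$ and upper bound $\le C h^2\log(1/h)+Ch^2\le C_\varepsilon h^{2-2\varepsilon}$. That is the $(1,1-\varepsilon,T_1,T_2)$-generalized quasihelix property; no sharper computation is needed.
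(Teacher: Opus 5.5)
Your proof is correct. For the asymptotics in $(i)$ and for $(iii)$ it is essentially the paper's argument:
\begin{itemize}
\item the same splitting into $\int_s^t$ and $\int_0^s$ (the paper's $J^2$ and $J^1$) and the same substitution, with your $s/h$ playing the role of the paper's $r$;
\item the same cut at $z=r/2$, with dominated convergence on the near part and the mean value theorem on the far part;
\item the mean value theorem for $\alpha>1/2$. The paper gets the lower bound in $(iii)$ by restricting to $u\in[0,T_1/2]$ and you get it by bounding $\theta_u$, which is equivalent.
\end{itemize}

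The differences are in two places.

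\textbf{Uniform bound in $(i)$.} The paper passes from the asymptotic relation to a two-sided bound for $t-s<\delta$. It then uses continuity and positivity of $\ex\bigl(U^{(2)}(t)-U^{(2)}(s)\bigr)^2/(t-s)^{2\alpha+1}$ on the compact set $\{t-s\ge\delta\}$. That step tacitly needs the asymptotics to be uniform in $s\in[T_1,T_2]$. This is true, because after factoring out $s^{-\gamma}(t-s)^{2\alpha+1}$ the ratio depends on $r$ only, but the paper does not say so. Your direct estimate is a constant times $\int_0^\infty[(1+z)^\alpha-z^\alpha]^2\,dz$ on the range $u\ge T_1/2$, plus an $O(h^2)\le Ch^{2\alpha+1}$ remainder. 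It avoids both the uniformity question and the compactness step, and it gives explicit constants. One small slip: on $z\le T_1/(2h)$ you only have $u=s-hz\ge T_1/2$, so the constant is $(T_1/2)^{-\gamma}$, not $T_1^{-\gamma}$.

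\textbf{Part $(ii)$.} The paper gives no argument here; it imports the bound $|t-s|^2(1+|\log|t-s||)$ from \cite{Wang2024}. You use the identity $(t-u)^{1/2}-(s-u)^{1/2}=h/\bigl((t-u)^{1/2}+(s-u)^{1/2}\bigr)$, which makes $J^1/h^2$ comparable to $\int_0^s u^{-\gamma}(s-u+h)^{-1}\,du$. This makes the statement self-contained. You should state the upper bound as $C(1+|\log h|)$, or $\log(1+s/(2h))+C$, rather than $\log(1/h)$, since $h$ ranges up to $T_2-T_1$ and need not be small. For the lower bound, a positive constant is all you need.
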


\begin{remark}
The constant $C(\alpha)$ appearing in statement~$(i)$ is the well-known normalizing constant from the Mandelbrot--Van Ness representation of fractional Brownian motion; see, e.g., \cite[Theorem~1.3.1]{Mishura}.
\end{remark}
 
\begin{proof}
Consider the increment $U^{(2)}(t)-U^{(2)}(s)$, where
$0<T_1\le s<t\le T_2$.

$(i)$
Denote $r=\frac{s}{t-s}$.
Obviously,
\begin{align}\label{jiji1and2}
\ex\big(U^{(2)}(t)-U^{(2)}(s)\big)^2
&= \int_{0}^{s} u^{-\gamma}
\left[(t-u)^{\alpha}-(s-u)^{\alpha}\right]^2 du + \int_{s}^{t} u^{-\gamma}(t-u)^{2\alpha}du \\
&=: J^1 + J^2 .\notag
\end{align}

The two terms admit the representations
\begin{equation}\label{Jiji11}
\begin{split}
J^1&= \int_{0}^{s} (s-z)^{-\gamma}
\left[(t-s+z)^{\alpha}-z^{\alpha}\right]^2 dz 
\\
&= (t-s)^{2\alpha-\gamma+1}
\int_{0}^{r} (r-z)^{-\gamma}
\left[(1+z)^{\alpha}-z^{\alpha}\right]^2 dz,
\end{split}
\end{equation}
and
\begin{equation}\label{int1.12}
J^{2} = \int_0^{t-s} (u+s)^{-\gamma}(t-s-u)^{2\alpha}\,du  = (t-s)^{2\alpha-\gamma+1}
\int_0^1 (z+r)^{-\gamma}(1-z)^{2\alpha}\,dz.
\end{equation}
Consequently,
\begin{equation}\label{eq:J1+J2}
J^1+J^2 = (t-s)^{2\alpha-\gamma+1}
\bigl(g_1(r) + g_2(r)\bigr),
\end{equation}
where
\[
g_1(r):=\int_{0}^{r} (r-z)^{-\gamma}
\left[(1+z)^{\alpha}-z^{\alpha}\right]^2 dz,
\qquad
g_2(r):=\int_0^1 (z+r)^{-\gamma}(1-z)^{2\alpha}\,dz.
\]

We now consider the asymptotic behavior of the functions $g_1(r)$ and $g_2(r)$ as $r\to \infty$.
First,
\begin{equation}\label{eq:g2conv}
r^\gamma g_2(r)
=
\int_0^1 \left(1+\frac{z}{r}\right)^{-\gamma}(1-z)^{2\alpha}\,dz
\to \int_0^1 (1-z)^{2\alpha}\,dz
=
\frac{1}{2\alpha+1},
\quad\text{as } r\to\infty.
\end{equation}
Next,
\[
r^\gamma g_1(r)
=
\int_0^r \left(1-\frac{z}{r}\right)^{-\gamma}
\left[(1+z)^{\alpha}-z^{\alpha}\right]^2 dz .
\]
We split this integral into the intervals $(0,r/2)$ and $(r/2,r)$. If $0 < z < r/2$, then
$(1 - z/r)^{-\gamma}\le 2^\gamma$.
Since
\[
\int_0^\infty \left[(1+z)^\alpha-z^\alpha\right]^2 dz<\infty,
\qquad -\tfrac12<\alpha<\tfrac12,
\]
the dominated convergence theorem yields
\[
\int_0^{r/2}\left(1-\frac{z}{r}\right)^{-\gamma}
\left[(1+z)^{\alpha}-z^{\alpha}\right]^2 dz
\to
\int_0^\infty \left[(1+z)^\alpha-z^\alpha\right]^2 dz
\quad\text{as } r\to\infty.
\]

It remains to show that the contribution from $(r/2,r)$ is negligible. By the mean value theorem, uniformly for $z\in[r/2,r]$,
\[
\bigl|(1+z)^\alpha-z^\alpha\bigr|\le C z^{\alpha-1}.
\]
Therefore,
\begin{multline*}
\int_{r/2}^{r}\left(1-\frac{z}{r}\right)^{-\gamma}
\bigl[(1+z)^\alpha-z^\alpha\bigr]^2\,dz
\le C\int_{r/2}^{r}\left(1-\frac{z}{r}\right)^{-\gamma}z^{2\alpha-2}\,dz
\\
= C r^{2\alpha-1}
\int_{1/2}^{1}(1-y)^{-\gamma}y^{2\alpha-2}\,dy
\to0, \quad\text{as } r\to\infty,
\
\end{multline*}
because $2\alpha-1<0$ and $\gamma<1$.
Therefore,
\begin{equation}\label{eq:g1conv}
r^\gamma g_1(r) \to
\int_0^\infty \left[(1+z)^\alpha-z^\alpha\right]^2 dz,
\quad\text{as } r\to\infty.
\end{equation}

Combining the limits \eqref{eq:g2conv} and \eqref{eq:g1conv}, we obtain
\[
g_1(r) + g_2(r) \sim C(\alpha) r^{-\gamma},
\quad\text{as } r\to\infty.
\]
where $C(\alpha)$ is given by \eqref{eq:Calpha}.
Thus, by \eqref{eq:J1+J2},
\[
\ex\bigl(U^{(2)}(t)-U^{(2)}(s)\bigr)^2
\sim C(\alpha) (t-s)^{2\alpha-\gamma+1} r^{-\gamma}
= C(\alpha) s^{-\gamma}(t-s)^{2\alpha+1},
\quad t\downarrow s.
\]
The second representation of $C(\alpha)$ in \eqref{eq:Calpha} is due to \cite[Appendix A]{Mishura}.

Since $s^{-\gamma}$ is bounded above and below by positive constants on
$[T_1,T_2]$, the asymptotic relation \eqref{eq:asymp} implies that there exist
$\delta>0$ and constants $0<c_1<C_1<\infty$ such that
\[
c_1(t-s)^{2\alpha+1}
\le
\ex\bigl(U^{(2)}(t)-U^{(2)}(s)\bigr)^2
\le
C_1(t-s)^{2\alpha+1}
\]
whenever $T_1\le s<t\le T_2$ and $t-s<\delta$. On the compact set
\[
D_\delta=\{(s,t): T_1\le s<t\le T_2,\ t-s\ge\delta\},
\]
the function
\[
F(s,t)=
\frac{\ex\left(U^{(2)}(t)-U^{(2)}(s)\right)^2}{(t-s)^{2\alpha+1}}
\]
is continuous and strictly positive. Hence it attains a positive minimum
and a finite maximum on $D_\delta$. Consequently, a two-sided bound of the
same form also holds for $t-s\ge\delta$. Combining the two regions and using
the symmetry of the incremental variance, we obtain the local
$(\alpha+\frac12,T_1,T_2)$-quasihelix property.
This proves statement~$(i)$.

$(ii)$ In the borderline case $\alpha = 1/2$ the statement follows from \cite[Remark~3.1(i)]{Wang2024}, where the following two-sided bounds were established 
\[
C_1 |t-s|^2 (1 + \bigl|\log|t-s|\bigr|)
\le \ex\left(U^{(2)}(t)-U^{(2)}(s)\right)^2
\le C_2 |t-s|^2 (1 + \bigl|\log|t-s|\bigr|).
\]
Indeed, the lower bound implies a bound of order $|t-s|^2$, while the upper bound satisfies
\[
|t-s|^2(1+|\log|t-s||)\le C_\varepsilon |t-s|^{2-2\varepsilon}
\]
for every $\varepsilon\in(0,1)$.

$(iii)$
By the Lagrange theorem, we can transform the term $J^1$   as follows: 
\begin{align*}
  J^1&= \int_{0}^{s} (s-u)^{-\gamma}
\left[(t-s+u)^{\alpha}-u^{\alpha}\right]^2 du=\alpha^2 (t-s)^2\int_{0}^{s} (s-u)^{-\gamma} 
 (\theta_{s.t}+u)^{2\alpha-2}du,  
\end{align*}
where $\theta_{s.t}\in  (0,t-s)$.
Assuming that $1/2< \alpha<1$, we note that   $\theta_{s.t}+u > u$   and consequently, get the following upper bound: 
\begin{align*} 
J^1&\le  \alpha^2 (t-s)^2\int_{0}^{s} (s-u)^{-\gamma}u^{2\alpha-2}du
\le \alpha^2  (t-s)^2s^{2\alpha-\gamma-1}B(1-\gamma, 2\alpha-1). 
\end{align*}
If $2\alpha-\gamma-1<0,$ then $s^{2\alpha-\gamma-1}\le T_1^{2\alpha-\gamma-1}$, while, if $2\alpha-\gamma-1\ge 0,$ then $s^{2\alpha-\gamma-1}\le T_2^{2\alpha-\gamma-1}$.
In any case, if $1/2< \alpha<1$, then there exists a constant $C(\alpha, \gamma, T_1, T_2)$ such that
\[
 J^1 \le  C(\alpha, \gamma, T_1, T_2)(t-s)^2.  
\]
Furthermore, if $\alpha> 1$, we note that  $\theta_{s.t}+u\le T_2$, and the  upper bound takes  a form 
\begin{align*}
  J^1&\le  \alpha^2 (t-s)^2 T_2^{2\alpha -2}\int_{0}^{s} (s-u)^{-\gamma}du\le  \alpha^2(1-\gamma)^{-1}T_2^{1 -\gamma}T_2^{2\alpha -2}(t-s)^2, 
  \end{align*}
  Summarizing, for any  $\gamma\in[0,1)$,  $0<T_1<T_2$ and any $\alpha>1/2$ there exists a constant $C(T_1,T_2,\alpha,\gamma)$ such that 
\begin{align}\label{Jiji11-1}
 J^1&\le  C(T_1,T_2,\alpha,  \gamma)(t-s)^2.
\end{align}
For the lower bound, take $u\in[0,T_1/2]$. Then $s-u,t-u\in[T_1/2,T_2]$, and the mean value theorem gives
\[
\bigl|(t-u)^\alpha-(s-u)^\alpha\bigr|
\ge c(T_1,T_2,\alpha)(t-s).
\]
Therefore,
\begin{equation}\label{Jiji11-2}
J^1 \ge c(T_1,T_2,\alpha)(t-s)^2
\int_0^{T_1/2}u^{-\gamma}\,du
\ge c(T_1,T_2,\alpha,\gamma)(t-s)^2.
\end{equation}
  
For any $\alpha>-1/2$ the term  $J^2$ can be  bounded as follows: 
\begin{equation}\label{Jiji2} 
T_2^{-\gamma} (t-s)^{2\alpha+1} \le J^2 \le T_1^{-\gamma} (t-s)^{2\alpha+1}. 
\end{equation}
Note that for $a<b$ and $s,t\in[T_1,T_2]$ it holds that $(t-s)^{b}\le (T_1+T_2)^{b-a}(t-s)^{a}$. This means that, having two degrees, we are forced to choose the smaller  degree in  the upper bound, and it is advantageous for us to take the smaller degree in  the lower one. With this at hand, note that for  $\alpha> 1/2$ we have that  $2\alpha+1>2, $ and the proof   follows from comparison of \eqref{Jiji11-1}, \eqref{Jiji11-2} and  \eqref{Jiji2}.   
\end{proof}

\begin{remark}
Statement~$(i)$ of Theorem~\ref{theor1-1b} complements \cite[Remark~3.1(i)]{Wang2024}, where the local quasihelix property was established for $-1/2<\alpha<1/2$. Here we refine this result by deriving the exact asymptotic behavior of the incremental variance, which may be useful in other applications.

Statement~$(iii)$ extends \cite[Lemma~3.3]{Wang2024}, where the corresponding local quasihelix property for $\alpha>1/2$ was proved under the additional restriction $\alpha<3/2$.
\end{remark}

The results of Theorem~\ref{theor1-1b} essentially rely on the assumption $T_1>0$. When $T_1=0$ one can still establish generalized quasihelix properties, but this requires more delicate estimates and naturally leads to slightly weaker bounds.
The following result gives us the quasihelix properties of the process $U^{(2)}$ on the interval $[0,T]$.

\begin{theorem}\label{theor1-1c}
Let $\gamma\in[0,1)$. Then the following assertions hold.
\begin{enumerate}[1)]
\item Let $\alpha\in(-1/2+\gamma/2,1/2+\gamma/2)$. Then 
\begin{enumerate}
    \item[$(i)$]
There exists a constant $C(\alpha,\gamma)>0$ such that, for any $0\le s<t$,
\[
\ex\big(U^{(2)}(t)-U^{(2)}(s)\big)^2
\le C(\alpha,\gamma)(t-s)^{2\alpha+1-\gamma}.
\]

\item[$(ii)$]
The process $U^{(2)}$ is a local
$(\alpha+\frac12,\alpha+\frac{1-\gamma}{2},0,T)$-generalized quasihelix for any $T>0$.
\end{enumerate}

\item If $\alpha\in[1/2+\gamma/2,1)$, then the process $U^{(2)}$ is a local
$(\alpha+\frac12,\alpha,0,T)$-generalized quasihelix for any $T>0$.

\item If $\alpha\ge1$, then the process $U^{(2)}$ is a local
$(\alpha+\frac12,1,0,T)$-generalized quasihelix for any $T>0$.
\end{enumerate}
\end{theorem}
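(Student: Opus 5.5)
We use the same decomposition \eqref{jiji1and2} of the incremental variance into $J^1+J^2$, now for $0\le s<t\le T$, and bound each term from above and below.

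\emph{Lower bounds.} In all three cases the lower exponent is $\alpha+\frac12$, i.e.\ we need $\ex(U^{(2)}(t)-U^{(2)}(s))^2\ge c(t-s)^{2\alpha+1}$. Since $J^1\ge0$, it suffices to use $J^2$. For $u\in(s,t)\subset[0,T]$ we have $u^{-\gamma}\ge T^{-\gamma}$, because $\gamma\ge0$. Hence
\[
J^2\ge T^{-\gamma}(t-s)^{2\alpha+1}/(2\alpha+1),
\]
which is the same estimate as the lower half of \eqref{Jiji2}, now with $T_2=T$. This is valid for every $\alpha>-1/2$.

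\emph{Upper bounds, case 1).} Here $\alpha\in(-\frac12+\frac\gamma2,\frac12+\frac\gamma2)$. Use the scaling representation \eqref{eq:J1+J2}:
\[
J^1+J^2=(t-s)^{2\alpha+1-\gamma}\bigl(g_1(r)+g_2(r)\bigr),\qquad r=s/(t-s)\in[0,\infty).
\]
Statement $(i)$ therefore reduces to showing $\sup_{r\ge0}(g_1(r)+g_2(r))<\infty$.

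For $g_2$ this is immediate: $g_2(r)\le g_2(0)=\int_0^1 z^{-\gamma}(1-z)^{2\alpha}dz=B(1-\gamma,2\alpha+1)$.

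For $g_1$ we argue as follows.
\begin{itemize}
\item Near $r\to\infty$, the proof of Theorem~\ref{theor1-1b}$(i)$ gives $g_1(r)=O(r^{-\gamma})$ when $\alpha<1/2$. For $\alpha\in[1/2,\frac12+\frac\gamma2)$ we redo the estimate using $[(1+z)^\alpha-z^\alpha]^2\le C\min(1,z^{2\alpha-2})$. On $(0,r/2)$ this gives at most $Cr^{-\gamma}(1+r^{2\alpha-1})$, or $Cr^{-\gamma}\log r$ when $\alpha=1/2$. On $(r/2,r)$ it gives $Cr^{2\alpha-1-\gamma}$. All of these are bounded, precisely because $2\alpha-1-\gamma<0$. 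This is where the upper restriction on $\alpha$ enters.
\item On bounded $r$, continuity of $g_1$ suffices.
\end{itemize}

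Thus $(i)$ holds for all $0\le s<t$. For $(ii)$, combine $(i)$ with the lower bound and take square roots. This gives the exponents $\alpha+\frac12$ and $\alpha+\frac{1-\gamma}{2}$, both positive since $\alpha>-\frac12+\frac\gamma2$.

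\emph{Upper bounds, cases 2) and 3).} Now $2\alpha+1-\gamma\ge 2$ and the scaling bound fails. We return to the Lagrange form of $J^1$ used in the proof of Theorem~\ref{theor1-1b}$(iii)$:
\[
J^1=\alpha^2(t-s)^2\int_0^s (s-u)^{-\gamma}(\theta+u)^{2\alpha-2}\,du,\qquad \theta\in(0,t-s).
\]

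\emph{Case 3), $\alpha\ge1$.} Bound $(\theta+u)^{2\alpha-2}\le T^{2\alpha-2}$. This gives $J^1\le C(t-s)^2$. Also $J^2\le\int_s^t u^{-\gamma}(t-s)^{2\alpha}\,du\le C(t-s)^{2\alpha+1-\gamma}$, which is $\le C(t-s)^2$ on $[0,T]$. Hence the upper exponent is $1$.

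\emph{Case 2), $\alpha\in[\frac12+\frac\gamma2,1)$.} The target is $C(t-s)^{2\alpha}$. For $J^2$ the bound $C(t-s)^{2\alpha+1-\gamma}\le C(t-s)^{2\alpha}$ is immediate. For $J^1$ we need more care, since $2\alpha-2<0$. Split the integral at $u=s-(t-s)$.
\begin{itemize}
\item Main part, with the Lagrange bound: use $\bigl|(t-u)^\alpha-(s-u)^\alpha\bigr|\le\alpha(t-s)(s-u)^{\alpha-1}$ (original variables), so that
\[
(t-s)^2\int_0^{s-(t-s)}u^{-\gamma}(s-u)^{2\alpha-2}\,du .
\]
\item Remaining piece: use $\bigl|(t-u)^\alpha-(s-u)^\alpha\bigr|\le(t-s)^\alpha$, giving at most $(t-s)^{2\alpha}\int_{s-(t-s)}^s u^{-\gamma}\,du\le C(t-s)^{2\alpha+1-\gamma}$.
\end{itemize}
For the main part, split the range where $u<s/2$ and where $u>s/2$. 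The contribution with $u>s/2$ is bounded by $s^{-\gamma}(t-s)^{2\alpha-1}$, and together with the prefactor this yields $(t-s)^2 s^{-\gamma}(t-s)^{2\alpha-1}$. On $u<s/2$ we get $(t-s)^2 s^{2\alpha-1-\gamma}$, which is fine if $s\lesssim t-s$ or $2\alpha-1-\gamma\ge0$.

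The main obstacle is this case-2 bookkeeping: showing the final bound is $\le C(t-s)^{2\alpha}$ uniformly in $s$. We would organize it by distinguishing $s\le 2(t-s)$ from $s>2(t-s)$. In the first regime we use crude bounds everywhere. In the second we use $s^{-\gamma}\le (t-s)^{-\gamma}$-type comparisons together with the exponent condition $2\alpha-1-\gamma\ge0$. If the term $(t-s)^{2\alpha+1}s^{-\gamma}$ resists, we fall back on the fact that $u^{-\gamma}(s-u)^{2\alpha-2}$ is integrable with total mass $Cs^{2\alpha-1-\gamma}\le CT^{2\alpha-1-\gamma}$, which gives $J^1\le C(t-s)^2\le C(t-s)^{2\alpha}$ directly.
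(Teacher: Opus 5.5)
Your lower bound, your treatment of case~1) through $\sup_{r\ge0}(g_1+g_2)<\infty$, and case~3) agree in substance with the paper. The gap is in case~2). You leave it as an unresolved ``obstacle'', and that obstacle rests on an incorrect estimate. With $h=t-s$, the piece $u\in(s/2,\,s-h)$ gives
\[
\int_{s/2}^{s-h}(s-u)^{2\alpha-2}\,du=\int_h^{s/2}w^{2\alpha-2}\,dw .
\]
Since $2\alpha-1\ge0$ in case~2), this is of order $s^{2\alpha-1}$, or $\log\frac{s}{2h}$ when $\alpha=\frac12$, $\gamma=0$. It is not of order $h^{2\alpha-1}$; that would need $\alpha<\frac12$. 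So the term $(t-s)^{2\alpha+1}s^{-\gamma}$ you worry about is an artifact.

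Your fallback $J^1\le\alpha^2B(1-\gamma,2\alpha-1)\,s^{2\alpha-1-\gamma}(t-s)^2\le C(T)(t-s)^2$ is valid for $\alpha>\frac12$. It even gives the upper exponent $1$ instead of $\alpha$. But $B(1-\gamma,0)=\infty$, so it breaks down exactly at $\gamma=0$, $\alpha=\frac12$, which belongs to case~2). At that point you would still have to show something like $(t-s)^2\log\frac{T}{t-s}\le C(T)(t-s)$, and the proposal does not do this.

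The missing observation is that the crude bound you apply only to the ``remaining piece'' already works on the whole range. For $0<\alpha<1$ one has $0\le(t-u)^\alpha-(s-u)^\alpha\le(t-s)^\alpha$ for every $u\in(0,s)$, so
\[
J^1\le(t-s)^{2\alpha}\int_0^s u^{-\gamma}\,du\le\frac{T^{1-\gamma}}{1-\gamma}\,(t-s)^{2\alpha}.
\]
Together with $J^2\le B(1-\gamma,2\alpha+1)(t-s)^{2\alpha+1-\gamma}$, this is the paper's proof of~2), with no splitting and no edge cases.

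A smaller point concerns case~1). The claim ``continuity of $g_1$ on bounded $r$'' needs justification at $r=0$ when $\alpha<0$, because the integrand behaves like $z^{2\alpha}$ near $0$. From $[(1+z)^\alpha-z^\alpha]^2\le z^{2\alpha}$ one gets $g_1(r)\le B(1-\gamma,2\alpha+1)\,r^{2\alpha+1-\gamma}$. This is where the lower restriction $\alpha>-\frac12+\frac\gamma2$ is actually used: for $\alpha<-\frac12+\frac\gamma2$, $g_1(r)\to\infty$ as $r\downarrow0$.
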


\begin{remark}
When $\gamma=0$, the process in item~$1)$ reduces to the
Riemann--Liouville fractional Brownian motion
$U^{(2)}(t)=\int_0^t (t-u)^\alpha\,dW_u$,
which is known to be an $(\alpha+1/2)$-quasihelix. Thus item~$1)(i)$,
which in this case gives
\[
\ex\big(U^{(2)}(t)-U^{(2)}(s)\big)^2
\le C(\alpha)(t-s)^{2\alpha+1},
\qquad 0\le s<t,
\]
extends this upper bound to the power-tempered Riemann--Liouville case.
\end{remark}

\begin{proof}
We first prove statement~$1)(i)$.

Let \(\alpha\in(-1/2+\gamma/2,0]\). If \(\alpha=0\), then \(J^1=0\).
Assume therefore that \(\alpha<0\), and put \(\alpha'=-\alpha>0\).
By \eqref{Jiji11},
\[
J^1=(t-s)^{2\alpha-\gamma+1}J^3,
\]
where
\[
J^3=
\int_0^r (r-z)^{-\gamma}
\bigl[(1+z)^\alpha-z^\alpha\bigr]^2\,dz.
\]
Since
\[
0<z^\alpha-(1+z)^\alpha
=
\frac{(1+z)^{\alpha'}-z^{\alpha'}}
{(1+z)^{\alpha'}z^{\alpha'}},
\]
and \(0<(1+z)^{\alpha'}-z^{\alpha'}<1\), we have
\begin{equation}\label{eq:neg-alpha-bound-1}
\bigl[(1+z)^\alpha-z^\alpha\bigr]^2
\le (1+z)^{2\alpha}z^{2\alpha}.
\end{equation}
Moreover, by the mean value theorem,
\[
(1+z)^{\alpha'}-z^{\alpha'}
=
\alpha'(z+\theta_z)^{\alpha'-1},
\qquad \theta_z\in(0,1),
\]
and therefore
\begin{equation}\label{eq:neg-alpha-bound-2}
\bigl[(1+z)^\alpha-z^\alpha\bigr]^2
\le C(\alpha)(1+z)^{2\alpha}z^{-2}.
\end{equation}

If \(r\le1\), then by \eqref{eq:neg-alpha-bound-1},
\[
J^3
\le
\int_0^r (r-z)^{-\gamma}z^{2\alpha}\,dz
=
r^{2\alpha-\gamma+1}
B(1-\gamma,2\alpha+1)
\le C(\alpha,\gamma),
\]
because \(2\alpha-\gamma+1>0\), or equivalently
\(\alpha>-1/2+\gamma/2\).

Let now \(r>1\). We split the integral defining \(J^3\) into the parts
over \((0,1)\) and \((1,r)\). For the first part, again using
\eqref{eq:neg-alpha-bound-1}, we obtain
\[
\int_0^1 (r-z)^{-\gamma}
\bigl[(1+z)^\alpha-z^\alpha\bigr]^2\,dz
\le
\int_0^1 (1-z)^{-\gamma}z^{2\alpha}\,dz
=
B(1-\gamma,2\alpha+1).
\]
For the second part, \eqref{eq:neg-alpha-bound-2} gives
\[
\int_1^r (r-z)^{-\gamma}
\bigl[(1+z)^\alpha-z^\alpha\bigr]^2\,dz
\le
C(\alpha)\int_1^r (r-z)^{-\gamma}z^{2\alpha-2}\,dz.
\]
If \(1<r\le2\), the last integral is bounded by
\[
\int_1^r (r-z)^{-\gamma}\,dz\le \frac{1}{1-\gamma}.
\]
If \(r>2\), we split it into \((1,r/2)\) and \((r/2,r)\). Then
\[
\int_1^{r/2}(r-z)^{-\gamma}z^{2\alpha-2}\,dz
\le
(r/2)^{-\gamma}\int_1^\infty z^{2\alpha-2}\,dz
\le C(\alpha,\gamma),
\]
and
\[
\int_{r/2}^{r}(r-z)^{-\gamma}z^{2\alpha-2}\,dz
=
r^{2\alpha-1-\gamma}
\int_{1/2}^{1}(1-z)^{-\gamma}z^{2\alpha-2}\,dz
\le C(\alpha,\gamma),
\]
because \(2\alpha-1-\gamma<0\). Thus \(J^3\le C(\alpha,\gamma)\) for all
\(r>0\), and consequently
\begin{equation}\label{eq:J1-upper-negative-alpha}
J^1
\le C(\alpha,\gamma)(t-s)^{2\alpha+1-\gamma},
\qquad 0\le s<t.
\end{equation}

By \eqref{int1.12}, for every \(\alpha>-1/2\),
\begin{equation}\label{eq:J2-upper-global}
J^{2}
\le (t-s)^{2\alpha-\gamma+1}
\int_0^1 z^{-\gamma}(1-z)^{2\alpha}\,dz
=
B(1-\gamma,2\alpha+1)(t-s)^{2\alpha-\gamma+1}.
\end{equation}
Therefore, \eqref{eq:J1-upper-negative-alpha} and \eqref{eq:J2-upper-global} yield
\[
\ex\bigl(U^{(2)}(t)-U^{(2)}(s)\bigr)^2
=J^1+J^2
\le C(\alpha,\gamma)(t-s)^{2\alpha+1-\gamma}
\]
for \(\alpha\in(-1/2+\gamma/2,0]\).

It remains to consider \(0<\alpha<1/2+\gamma/2\). We use the same representation
\[
J^1=(t-s)^{2\alpha-\gamma+1}J^3.
\]
In this case the estimate of \(J^3\) is simpler. Since
\((1+z)^\alpha-z^\alpha\le1\), for \(r\le1\) we have
\[
J^3
\le
\int_0^r (r-z)^{-\gamma}\,dz
=
\frac{r^{1-\gamma}}{1-\gamma}
\le \frac{1}{1-\gamma}.
\]
Hence
\[
J^1\le C(\alpha,\gamma)(t-s)^{2\alpha+1-\gamma}.
\]

Let now \(r>1\). As above, we split \(J^3\) into the parts over \((0,1)\) and
\((1,r)\). The first part is bounded by
\[
\int_0^1 (r-z)^{-\gamma}
\bigl[(1+z)^\alpha-z^\alpha\bigr]^2\,dz
\le
\int_0^1 (1-z)^{-\gamma}
\bigl[(1+z)^\alpha-z^\alpha\bigr]^2\,dz
\le C(\alpha,\gamma).
\]
For the second part, by the mean value theorem, for \(z\ge1\),
\[
(1+z)^\alpha-z^\alpha
=
\alpha(z+\theta_z)^{\alpha-1},
\qquad \theta_z\in(0,1),
\]
and therefore
\begin{align*}
\int_1^r(r-z)^{-\gamma}
\bigl[(1+z)^\alpha-z^\alpha\bigr]^2\,dz
&\le \alpha^2\int_1^r(r-z)^{-\gamma}z^{2\alpha-2}\,dz  \\
&= \alpha^2 r^{2\alpha-\gamma-1}
\int_{1/r}^{1}(1-z)^{-\gamma}z^{2\alpha-2}\,dz .
\end{align*}
It remains to check that
\[
f(r)=r^{2\alpha-\gamma-1}
\int_{1/r}^{1}(1-z)^{-\gamma}z^{2\alpha-2}\,dz
\]
is bounded on \([1,\infty)\). If \(0<\alpha<1/2\), then the integral is
\(O(r^{1-2\alpha})\); if \(\alpha=1/2\), it is \(O(\log r)\); and if
\(\alpha>1/2\), it is bounded. Since
\(\alpha<1/2+\gamma/2\), all three cases imply
\(f(r)\le C(\alpha,\gamma)\). Thus \(J^3\le C(\alpha,\gamma)\), and hence
\[
J^1\le C(\alpha,\gamma)(t-s)^{2\alpha+1-\gamma}.
\]
Combining this with \eqref{eq:J2-upper-global}, we obtain
\[
\ex\bigl(U^{(2)}(t)-U^{(2)}(s)\bigr)^2
\le C(\alpha,\gamma)(t-s)^{2\alpha+1-\gamma}
\]
for \(0<\alpha<1/2+\gamma/2\). This proves statement~$1)(i)$.

To prove statement~$1)(ii)$, it remains to obtain a lower bound. By
\eqref{jiji1and2}, for any $T>0$ and $0\le s<t\le T$,
\begin{equation}\label{eq:J2-lower}
J^2
=
\int_s^t u^{-\gamma}(t-u)^{2\alpha}\,du
\ge
T^{-\gamma}\int_s^t(t-u)^{2\alpha}\,du
=
\frac{T^{-\gamma}}{2\alpha+1}(t-s)^{2\alpha+1}.
\end{equation}
Consequently,
\begin{equation}\label{eq:lower-bound}
\ex\big(U^{(2)}(t)-U^{(2)}(s)\big)^2
\ge
\frac{T^{-\gamma}}{2\alpha+1}(t-s)^{2\alpha+1},
\qquad 0\le s<t\le T.
\end{equation}
Combining this with statement~$1)(i)$, we conclude that $U^{(2)}$
is a local
$(\alpha+\frac12,\alpha+\frac{1-\gamma}{2},0,T)$-generalized quasihelix.

\bigskip
It remains to prove statements~$2)$ and $3)$. Note that
\eqref{eq:J2-lower}, and hence \eqref{eq:lower-bound}, is valid for all
$\alpha>-1/2$. Therefore, it remains only to establish the corresponding
upper bounds.

\medskip

$2)$ Let $\alpha\in[1/2+\gamma/2,1)$. Since
$(1+z)^\alpha-z^\alpha\le1$, \eqref{Jiji11} gives
\[
J^1\le C(T,\alpha,\gamma)(t-s)^{2\alpha}.
\]
Moreover, by \eqref{int1.12},
\[
J^2\le C(\alpha,\gamma)(t-s)^{2\alpha+1-\gamma}
\le C(T,\alpha,\gamma)(t-s)^{2\alpha}.
\]
Together with \eqref{eq:lower-bound}, this proves the local
$(\alpha+\frac12,\alpha,0,T)$-generalized quasihelix property.

\medskip

$3)$ Let $\alpha\ge1$. Then, by the mean value theorem,
\[
0 < (t-u)^\alpha-(s-u)^\alpha
=
\alpha(\theta_{s,t}-u)^{\alpha-1}(t-s),
\qquad \theta_{s,t}\in(s,t).
\]
Therefore,
\[
J^1
\le
\alpha^2(t-s)^2\int_0^s u^{-\gamma}(t-u)^{2\alpha-2}\,du
\le
\frac{\alpha^2}{1-\gamma}T^{2\alpha-1-\gamma}(t-s)^2.
\]
Also,
\[
J^2
\le
(t-s)^{2\alpha}\frac{t^{1-\gamma}-s^{1-\gamma}}{1-\gamma}
\le
\frac{(t-s)^{2\alpha+1-\gamma}}{1-\gamma}.
\]
Since $2\alpha+1-\gamma>2$, it follows that
\[
\ex\big(U^{(2)}(t)-U^{(2)}(s)\big)^2
=J^1+J^2
\le C(T,\alpha,\gamma)(t-s)^2.
\]
Combining this with \eqref{eq:lower-bound}, we obtain that $U^{(2)}$
is a local $(\alpha+\frac12,1,0,T)$-generalized quasihelix.

The theorem is proved.
\end{proof}

\subsection{Gaussian Volterra processes starting from \texorpdfstring{$-\infty$}{-Inf}}
\label{sec:gvp-inf}

We now consider three specific Gaussian Volterra processes defined by distinct kernels $K(t,s)$ and started from $-\infty$. As before, we analyze their properties in the context of conditions \eqref{loc_quasihelix} and \eqref{loc_gen_quasihelix}.

The first two processes are known as tempered fractional Brownian motion and tempered fractional Brownian motion of the second kind. They were introduced in \cite{Meerschaert2013} and \cite{Sabzikar2018}, respectively, and have been extensively studied in \cite{Azmoodeh2021, Boniece2021, Macioszek2025, MR24, Prykhodko2026, Zhang2024}. These processes are defined via generalized exponential-tempered Riemann--Liouville kernels on the real line for $\alpha>-1/2$ and $\lambda>0$. Since, by construction, the integrands vanish for $u>t$, we obtain the following representations:
\begin{align}\label{gen_tfbm1}
U^{(4)}(t)
&= \int_{-\infty}^{t}
\left[
e^{-\lambda (t-u)}(t-u)^{\alpha}
-
e^{-\lambda (-u)_+}(-u)_+^{\alpha}
\right]\,dW_u,
\qquad t\ge0,
\\
\label{gen_tfbm2}
U^{(5)}(t)
&= \int_{-\infty}^{t}
\biggl[
e^{-\lambda (t-u)}(t-u)^{\alpha}
- e^{-\lambda (-u)_+}(-u)_+^{\alpha}
\\
&\qquad\qquad\qquad\qquad
+ \lambda \int_{0}^{t} e^{-\lambda (v-u)_+}(v-u)_+^{\alpha}\,dv
\biggr]\,dW_u,
\qquad t\ge0.
\notag
\end{align}

The third process was introduced in \cite{Wang2024}. For $\gamma\in[0,1)$ and $\alpha>-1/2$, it is given by
\begin{equation}\label{gfbm_full}
U^{(6)}(t)
=
\int_{-\infty}^{t}
|u|^{-\gamma/2}
\left[(t-u)^{\alpha}-(-u)_+^{\alpha}\right]\,dW_u,
\qquad t\ge0.
\end{equation}
It can be decomposed as
\[
U^{(6)}(t)=V(t)+U^{(2)}(t),
\]
where
\[
V(t)
=
\int_{-\infty}^{0}
(-u)^{-\gamma/2}
\left[(t-u)^{\alpha}-(-u)^{\alpha}\right]\,dW_u.
\]
The processes $V$ and $U^{(2)}$ are independent. Since the quasihelix properties of $U^{(2)}$ have already been established, it remains to study the process $V$.

The following theorem collects the quasihelix and generalized quasihelix properties for the processes (\ref{gen_tfbm1})--(\ref{gfbm_full}).

\begin{theorem}\label{theor1-2}
\leavevmode
\begin{enumerate}[1)]
\item
\begin{enumerate}[(i)]
\item
For any fixed $\lambda>0$ and $\alpha\in(-1/2,1/2)$, the processes
$U^{(4)}$ and $U^{(5)}$ are local
$(\alpha+\frac12,0,T)$-quasihelices for any $T>0$.

\item
Let $\alpha=1/2$. Then, for any fixed $\lambda>0$, the processes
$U^{(4)}$ and $U^{(5)}$ are local
$(1,1-\varepsilon,0,T)$-generalized quasihelices for any $T>0$ and any
$\varepsilon\in(0,1)$.

\item
Let $\alpha>1/2$. Then, for any fixed $\lambda>0$, the processes
$U^{(4)}$ and $U^{(5)}$ are local $(1,0,T)$-quasihelices for any $T>0$.
\end{enumerate}

\item
Let $0\le\gamma<1$, $-1/2<\alpha<1/2+\gamma/2$, and $\alpha\ne0$. Then, for any $0<T_1<T_2$, the process $V$ is a local $(1,T_1,T_2)$-quasihelix.
\end{enumerate}
\end{theorem}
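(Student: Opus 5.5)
For part~1) the plan is to combine the known small-lag asymptotics of the incremental variance of $U^{(4)}$ and $U^{(5)}$ with stationarity of increments, and then use a compactness argument as in the proof of Theorem~\ref{theor1-1b}$(i)$. For part~2) the plan is a direct mean value computation for $V$, where the condition $\alpha<1/2+\gamma/2$ is exactly what is needed for the resulting integral to converge.

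\emph{Part 1).} Both $U^{(4)}$ and $U^{(5)}$ have stationary increments, as follows from a change of variables $u\mapsto u+s$ in the representations \eqref{gen_tfbm1}--\eqref{gen_tfbm2}. Hence $\ex(U^{(i)}(t)-U^{(i)}(s))^2=\sigma_i^2(t-s)$ for a function $\sigma_i^2$ of one variable, $i=4,5$. I would quote from \cite{Meerschaert2013,Sabzikar2018,Azmoodeh2021} the following behaviour as $h\downarrow0$:
\begin{itemize}
\item $\sigma_i^2(h)\asymp h^{2\alpha+1}$ for $\alpha\in(-1/2,1/2)$;
\item $\sigma_i^2(h)\asymp h^2(1+|\log h|)$ for $\alpha=1/2$;
\item $\sigma_i^2(h)\asymp h^2$ for $\alpha>1/2$.
\end{itemize}
These give two-sided bounds for $0<h<\delta$. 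For $h\in[\delta,T]$, the function $\sigma_i^2$ is continuous. It is also strictly positive, since the kernel of the increment is not a.e.\ zero. So $\sigma_i^2(h)/h^{2\rho}$ is bounded above and below on $[\delta,T]$. Because the increments are stationary, this yields the bounds on all of $[0,T]$, not only on subintervals away from the origin; this is the ``extension to arbitrary compact intervals''. In the case $\alpha=1/2$ the factor $1+|\log h|$ is absorbed exactly as in Theorem~\ref{theor1-1b}$(ii)$: drop it for the lower bound, and use $h^2(1+|\log h|)\le C_\varepsilon h^{2-2\varepsilon}$ for the upper bound.

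\emph{Part 2).} Substituting $x=-u$, for $T_1\le s<t\le T_2$,
\[
\ex\bigl(V(t)-V(s)\bigr)^2=\int_0^\infty x^{-\gamma}\bigl[(t+x)^\alpha-(s+x)^\alpha\bigr]^2dx .
\]
By the mean value theorem, $(t+x)^\alpha-(s+x)^\alpha=\alpha(\theta+x)^{\alpha-1}(t-s)$ with $\theta=\theta_{s,t,x}\in(s,t)\subset[T_1,T_2]$. Since $\alpha<1/2+\gamma/2<1$, the exponent $2\alpha-2$ is negative, so
\[
(T_2+x)^{2\alpha-2}\le(\theta+x)^{2\alpha-2}\le(T_1+x)^{2\alpha-2}.
\]
This gives
\[
\alpha^2 I(T_2)(t-s)^2\le \ex\bigl(V(t)-V(s)\bigr)^2\le \alpha^2 I(T_1)(t-s)^2,
\]
where
\[
I(T)=\int_0^\infty x^{-\gamma}(T+x)^{2\alpha-2}dx=T^{2\alpha-1-\gamma}B(1-\gamma,\,1+\gamma-2\alpha).
\]
The quantity $I(T)$ is finite and positive precisely because $\gamma<1$ (integrability at $0$) and $2\alpha-2-\gamma<-1$, i.e.\ $\alpha<1/2+\gamma/2$ (integrability at $\infty$). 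The assumption $\alpha\ne0$ guarantees $\alpha^2>0$; indeed $V\equiv0$ when $\alpha=0$. Taking square roots gives the local $(1,T_1,T_2)$-quasihelix property.

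\emph{Main obstacle.} I expect the main difficulty to be in part~1), specifically in locating the precise small-lag asymptotics of $\sigma_i^2$ in the cited literature, especially for $U^{(5)}$ and in the borderline case $\alpha=1/2$. If these are not stated there in the required form, I would derive them directly. The plan would be to split the increment kernel into a near part $\int_s^t$, which gives order $h^{2\alpha+1}$, and a far part $\int_{-\infty}^s$. The far part would be treated with the mean value theorem as in Theorem~\ref{theor1-1b}$(iii)$: it is $O(h^2)$ for $\alpha>1/2$, $O(h^2|\log h|)$ for $\alpha=1/2$, and of order $h^{2\alpha+1}$ after scaling for $\alpha<1/2$. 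The exponential tempering makes the integrals at $-\infty$ converge.
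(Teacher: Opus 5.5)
Your proposal is correct, and part~2) is the paper's argument in a more streamlined form. The paper rescales by $h=t-s$, $r=s/h$ before applying the mean value theorem; after undoing the scaling, its two bounds are exactly $\alpha^2h^2I(s)$ and $\alpha^2h^2I(t)$ in your notation, which it then controls via $s\in[T_1,T_2]$ and $t/s\le T_2/T_1$. Your choice $\theta\in(s,t)\subset[T_1,T_2]$ gives $\alpha^2I(T_2)h^2\le\ex(V(t)-V(s))^2\le\alpha^2I(T_1)h^2$ directly, with explicit Beta-function constants.

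The genuine difference is in part~1), where the paper does not use stationarity of increments. It takes the property on $[0,1]$ from \cite[Theorem~2.7]{Azmoodeh2021}, and the cases $\alpha\ge1/2$ from \cite{Prykhodko2026}. It then passes to $[0,T]$ by the scaling relation $U_\lambda(Tt)\stackrel{d}{=}T^{\alpha+1/2}U_{T\lambda}(t)$, writing $U_\lambda$ for $U^{(4)}$ or $U^{(5)}$ with tempering parameter $\lambda$; that is, it applies the $[0,1]$ result with parameter $T\lambda$. This is quick, but it uses the specific structure of the tempered kernels and needs the $[0,1]$ result for every $\lambda>0$. Your route (stationary increments, two-sided small-lag bounds, and continuity and positivity of $\sigma_i^2$ on $[\delta,T]$) needs only the behaviour as $h\downarrow0$ and works for any process with stationary increments.

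One caveat if you end up deriving the small-lag bounds yourself. The near part $\int_s^t$ yields only a lower bound of order $h^{2\alpha+1}$, which is not enough for the required $ch^2$ when $\alpha>1/2$. You then also need a lower bound for the far part. For example, take $s=0$ and small $h$, and restrict to $-u\in[\alpha/(4\lambda),\alpha/(2\lambda)]$. There $g(x)=e^{-\lambda x}x^\alpha$ has $g'(x)=e^{-\lambda x}x^{\alpha-1}(\alpha-\lambda x)$ bounded away from zero. The extra term of $U^{(5)}$ is nonnegative and can be discarded.
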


\begin{proof}
Statement~$1)(i)$ was proved on the interval $[0,1]$ in
\cite[Theorem~2.7]{Azmoodeh2021}. The extension to an arbitrary interval
$[0,T]$ follows from the corresponding scaling relation for tempered
fractional processes, with constants depending on $T$. The cases
$\alpha\ge1/2$ were established in \cite[Corollary~1]{Prykhodko2026}.
Thus the quasihelix and generalized quasihelix properties stated in
$1)(i)$--$1)(iii)$ follow.

It remains to prove statement~2). For $0\le s<t$, we have
\[
V(t)-V(s)
=
\int_{-\infty}^{0}(-u)^{-\gamma/2}
\left[(t-u)^\alpha-(s-u)^\alpha\right]\,dW_u.
\]
Hence
\begin{align}
\ex (V(t)-V(s))^2
&=
\int_{-\infty}^{0}(-u)^{-\gamma}
\left[(t-u)^\alpha-(s-u)^\alpha\right]^2\,du
\notag\\
&=
\int_{0}^{\infty}u^{-\gamma}
\left[(t+u)^\alpha-(s+u)^\alpha\right]^2\,du
=:J^4 .
\label{eq:J5}
\end{align}
Putting $h=t-s$ and $r=s/h$, we rewrite $J^4$ as
\begin{align*}
J^4
&=
\int_s^\infty (u-s)^{-\gamma}
\left[(h+u)^\alpha-u^\alpha\right]^2\,du
\\
&=
h^{2\alpha-\gamma+1}
\int_r^\infty (u-r)^{-\gamma}
\left[(1+u)^\alpha-u^\alpha\right]^2\,du .
\end{align*}
By the mean value theorem,
\[
(1+u)^\alpha-u^\alpha
=
\alpha(\theta_u+u)^{\alpha-1},
\qquad \theta_u\in(0,1).
\]
Since $\alpha<1/2+\gamma/2<1$, the exponent $2\alpha-2$ is negative.
Therefore,
\begin{align*}
J^4
&\le
\alpha^2 h^{2\alpha-\gamma+1}
\int_r^\infty (u-r)^{-\gamma}u^{2\alpha-2}\,du
\\
&=
\alpha^2 h^{2}
s^{2\alpha-\gamma-1}
\int_1^\infty (u-1)^{-\gamma}u^{2\alpha-2}\,du .
\end{align*}
The integral is finite precisely under the condition
$\alpha<1/2+\gamma/2$. Since $s\in[T_1,T_2]$, this gives
\[
J^4\le C(T_1,T_2,\alpha,\gamma)(t-s)^2 .
\]

For the lower bound, again using the mean value theorem and the negativity of
$2\alpha-2$, we obtain
\begin{align*}
J^4
&\ge
\alpha^2 h^{2\alpha-\gamma+1}
\int_r^\infty (u-r)^{-\gamma}(1+u)^{2\alpha-2}\,du
\\
&=
\alpha^2 h^{2}s^{2\alpha-\gamma-1}
\int_0^\infty v^{-\gamma}
\left(\frac{t}{s}+v\right)^{2\alpha-2}\,dv .
\end{align*}
Since $T_1\le s<t\le T_2$, we have $t/s\le T_2/T_1$. As
$2\alpha-2<0$,
\[
\left(\frac{t}{s}+v\right)^{2\alpha-2}
\ge
\left(\frac{T_2}{T_1}+v\right)^{2\alpha-2}.
\]
Therefore,
\[
J^4
\ge
c(T_1,T_2,\alpha,\gamma)(t-s)^2 .
\]
Combining the two estimates yields
\[
c(T_1,T_2,\alpha,\gamma)(t-s)^2
\le
\ex (V(t)-V(s))^2
\le
C(T_1,T_2,\alpha,\gamma)(t-s)^2,
\]
which proves that $V$ is a local $(1,T_1,T_2)$-quasihelix.
\end{proof}

\begin{corollary}
Let $0<T_1<T_2$ and $\gamma\in[0,1)$.
Comparing the properties of the process $U^{(2)}$ from Theorem \ref{theor1-1b} and of  process $V$ from Theorem \ref{theor1-2},  we can state the respective properties of the process $U^{(6)}$.
\begin{enumerate}[$(i)$]
\item
If $\alpha\in(-1/2,1/2)$, then $U^{(6)}$ is a local
$(\alpha+\frac12,T_1,T_2)$-quasihelix.

\item
If $\alpha=1/2$, then $U^{(6)}$ is a local
$(1,1-\varepsilon,T_1,T_2)$-generalized quasihelix for any
$\varepsilon\in(0,1)$.

\item
If $\alpha\in(1/2,1/2+\gamma/2)$, then $U^{(6)}$ is a local
$(1,T_1,T_2)$-quasihelix.
\end{enumerate}
\end{corollary}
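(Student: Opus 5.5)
The plan is to use the decomposition $U^{(6)}=V+U^{(2)}$ together with the independence of $V$ and $U^{(2)}$. Independence gives, for $T_1\le s<t\le T_2$,
\[
\ex\bigl(U^{(6)}(t)-U^{(6)}(s)\bigr)^2
=\ex\bigl(V(t)-V(s)\bigr)^2+\ex\bigl(U^{(2)}(t)-U^{(2)}(s)\bigr)^2 .
\]
So two-sided power bounds for the sum follow from the bounds for each summand. The lower bound can always be taken from either summand. The upper bound is governed by the smaller exponent, after using $(t-s)^b\le C(T_1,T_2)(t-s)^a$ for $a<b$, as noted at the end of the proof of Theorem~\ref{theor1-1b}.

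For $V$, Theorem~\ref{theor1-2}, statement~2), gives $\ex(V(t)-V(s))^2\asymp (t-s)^2$ on $[T_1,T_2]$ whenever $-1/2<\alpha<1/2+\gamma/2$ and $\alpha\neq0$. For $\alpha=0$ the kernel of $V$ vanishes, so $V\equiv0$. In every case we therefore have the upper bound $\ex(V(t)-V(s))^2\le C(t-s)^2$.

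Case $(i)$, $\alpha\in(-1/2,1/2)$. Theorem~\ref{theor1-1b}$(i)$ gives $\ex(U^{(2)}(t)-U^{(2)}(s))^2\asymp(t-s)^{2\alpha+1}$ with $2\alpha+1<2$. Hence $(t-s)^2\le C(t-s)^{2\alpha+1}$. The sum is then bounded above by $C(t-s)^{2\alpha+1}$ and below by the $U^{(2)}$ term, which gives the $(\alpha+\frac12,T_1,T_2)$-quasihelix property. Note that this range always lies inside $(-1/2,1/2+\gamma/2)$, so the bound for $V$ is applicable.

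Case $(ii)$, $\alpha=1/2$. This is again inside the admissible range for $V$. Theorem~\ref{theor1-1b}$(ii)$ gives the lower bound $c(t-s)^2$ and the upper bound $C(t-s)^{2-2\varepsilon}$. Since $(t-s)^2\le C(t-s)^{2-2\varepsilon}$, the $V$ term does not change the upper bound, and the lower bound passes through. This gives the $(1,1-\varepsilon)$ generalized quasihelix property.

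Case $(iii)$, $\alpha\in(1/2,1/2+\gamma/2)$. Both summands are of order $(t-s)^2$: this comes from Theorem~\ref{theor1-1b}$(iii)$ and Theorem~\ref{theor1-2}, statement~2), using $\alpha\ne0$. So the sum is also of order $(t-s)^2$, which gives the $(1,T_1,T_2)$-quasihelix property.

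The only point needing care is checking that each $\alpha$-range lies in the hypothesis range of Theorem~\ref{theor1-2}, and treating $\alpha=0$ separately. I do not expect a genuine obstacle beyond that bookkeeping.
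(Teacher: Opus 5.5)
Your argument is the one the paper intends; the corollary's only justification is its comparison sentence. Independence of $V$ and $U^{(2)}$ makes the incremental variances add, the lower bound comes from either summand, and the upper bound is governed by the smaller exponent. Your separate handling of $\alpha=0$, where $V\equiv0$ and Theorem~\ref{theor1-2} does not apply, is a useful addition that the paper leaves implicit.

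However, the range check you called the only delicate point fails in case $(ii)$. The value $\alpha=1/2$ lies in $(-1/2,1/2+\gamma/2)$ only when $\gamma>0$. For $\gamma=0$ this is more than a missing hypothesis. Since $(t+u)^{1/2}-(s+u)^{1/2}\ge (t-s)/\bigl(2(t+u)^{1/2}\bigr)$, you get $\ex(V(t)-V(s))^2=\int_0^\infty[(t+u)^{1/2}-(s+u)^{1/2}]^2\,du\ge\frac{(t-s)^2}{4}\int_0^\infty\frac{du}{t+u}=\infty$. So $U^{(6)}$, which here is the Mandelbrot--Van Ness integral with $H=1$, is not defined at all.

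Hence item $(ii)$ is meaningful, and your proof of it valid, only for $\gamma\in(0,1)$. You should state this restriction explicitly rather than assert the inclusion. The paper's statement has the same defect, because its comparison also relies on Theorem~\ref{theor1-2} at this point. Item $(iii)$ is vacuous for $\gamma=0$, so it is unaffected.

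A similar caveat applies at the lower end. For $\alpha<0$ with $\alpha\le-\frac12+\frac\gamma2$, $\ex U^{(6)}(t)^2$ diverges because of the factor $(-u)^{2\alpha-\gamma}$ near $u=0^-$, so $U^{(6)}(t)$ itself is not in $L_2$. The increments on $[T_1,T_2]$ are still finite, and your case-$(i)$ argument remains valid for them. With these restrictions your proof is complete.
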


\end{document}